\newcommand{\be}{\begin{eqnarray}}
\newcommand{\ee}{\end{eqnarray}}
\newcommand{\ce}{\begin{eqnarray*}}
\newcommand{\de}{\end{eqnarray*}}
\newtheorem{thm}{Theorem}[section]
\newtheorem{lem}[thm]{Lemma}
\newtheorem{exa}[thm]{Example}
\theoremstyle{definition}
\definecolor{wco}{rgb}{0.5,0.2,0.3}
\numberwithin{equation}{section} \theoremstyle{remark}
\newtheorem{rem}{Remark}[section]
\def\<{\langle} \def\>{\rangle}  
\def\d{\text{\rm{d}}}   
\def\E{\mathbb E}  
\def\beg{\begin} \def\beq{\begin{equation}}  \def\F{\mathcal F}
 \def\P{\mathbb P} 
 \def\ee{\varepsilon}
\def\[{{\Big[}}
\def\]{{\Big]}}
\def\({{\Big(}}
\def\){{\Big)}}
\title{{\bf  SPDE in Hilbert Space with Locally Monotone Coefficients} 
\footnote{Please cite as: J. Funct. Anal. 259 (2010),  2902--2922.}}
\author{{\bf  Wei Liu $^{a}$\footnote{Corresponding author:
wei.liu@uni-bielefeld.de}
~
and
 Michael R\"{o}ckner $^{a,b}$
}\\
{\footnotesize $a.$ Fakult\"at f\"ur Mathematik, Universit\"at Bielefeld,
D-33501 Bielefeld, Germany}\\
  \footnotesize{$b.$ Department of Mathematics and Statistics, Purdue University, West Lafayette, 47906 IN, USA}\\
}
\date{}
\begin{document}
\maketitle


\begin{abstract}
The aim of this paper is to extend the usual framework of SPDE with
monotone coefficients to include a large class of cases with merely
locally monotone coefficients. This new framework is conceptually
not more involved than the classical one, but includes many more
fundamental examples not included previously. Thus our main result
can be applied to various types of SPDEs such as stochastic
reaction-diffusion equations, stochastic Burgers type equation,
stochastic 2-D Navier-Stokes equation, stochastic $p$-Laplace
equation and stochastic porous media equation with  non-monotone
perturbations.
\end{abstract}
\noindent
 AMS Subject Classification:\ 60H15, 37L30, 34D45 \\
\noindent
 Keywords: Stochastic evolution equation; locally monotone; coercivity;
  Navier-Stokes equation; variational approach.

\bigbreak

\section{Introduction}

Let
$$V\subset H\equiv H^*\subset V^*$$
 be a Gelfand triple, $i.e.$  $(H, \<\cdot,\cdot\>_H)$ is a separable
Hilbert space and identified with its dual space by the Riesz
isomorphism, $V$ is a reflexive  Banach space such that it is
 continuously and densely embedded into $H$. If  ${ }_{V^*}\<\cdot,\cdot\>_V$ denotes  the dualization
between  $V$ and its dual space $V^*$,  then it follows that
$$ { }_{V^*}\<u, v\>_V=\<u, v\>_H, \  u\in H ,v\in V.$$
Let $\{W_t\}_{t\geq0}$ be a cylindrical Wiener process on a
separable
 Hilbert space $U$
w.r.t a complete filtered probability space
$(\Omega,\mathcal{F},\mathcal{F}_t,\mathbb{P})$ and $\left(L_2(U;H),
 \|\cdot\|_2\right) $  denotes
the space of all Hilbert-Schmidt operators from $U$ to $H$. We
consider the following stochastic evolution equation
 \begin{equation}\label{SEE}
\d X_t=A(t,X_t) \d t+B(t,X_t) \d W_t,
\end{equation}
 where for some fixed time $T$
$$A: [0,T]\times V\times \Omega\to V^*;\  \  B:
[0,T]\times V\times \Omega\to L_{2}(U;H)$$ are progressively
measurable, $i.e.$ for every $t\in[0,T]$, these maps restricted to
$[0,t]\times V\times \Omega$ are
$\mathcal{B}([0,t])\otimes\mathcal{B}(V)\otimes\F_t$-measurable
(where $\mathcal{B}$ denotes the corresponding Borel
$\sigma$-algebra).

It is well known that (\ref{SEE}) has a unique solution if $A,B$
satisfy the classical monotone and coercivity conditions (cf.
\cite{KR79,PR07}), which we recall in the Appendix below. The theory
of monotone operators starts from  substantial work of Minty
\cite{Mi62,Mi63} and Browder \cite{Bro63,Bro64} for PDE.  We refer
to \cite{Li72,Z90,Sh97} for a detailed exposition and references. In
recent years, this variational approach has been also used
intensively for analyzing SPDE driven by infinite-dimensional Wiener
process. Unlike the semigroup approach (cf. \cite{DPZ92}), it is not
necessary to have a linear operator in the drift part which has to
generate a semigroup. Hence the variational approach can be used to
investigate
 nonlinear SPDE which are not necessarily of semilinear type. For general
results on the existence and uniqueness of solutions to SPDE we
refer to \cite{Par75,KR79,G,RRW,Zh08}. Within this framework many
different types of properties have already been established, $e.g.$
see \cite{C92,L08b,RZ,RWW} for the small noise large deviation
principle, \cite{GM07,GM09} for discretization approximation schemes
to the solutions of SPDE, \cite{W07,LW08,L08} for the
dimension-free Harnack inequality and resulting ergodicity,
compactness and contractivity properties of  the associated
transition semigroups, and \cite{L10,BGLR,GLR} for the invariance of
subspaces and existence of random attractors for corresponding
random dynamical systems. As one typical example of SPDE in this
framework, the stochastic porous media equation has been extensively
studied in \cite{BDR07,BDR08,BDR2,BDR1,DRRW,RW}.

The main aim of this paper is to provide a more general framework
for the variational approach, being conceptually not more
complicated than the classical one (cf. \cite{KR79}), but including
a large number of new applications as e.g. fundamental SPDE as the
stochastic   2-D Navier-Stokes equation and stochastic Burgers type
equation. The main changes consist of localizing the monotonicity
condition and relaxing the growth condition. This new framework is,
in addition, more stable with respect to perturbations. We refer to
Section 3 below for details. In particular, we can simplify the
related approach to the stochastic 2-D Navier-Stokes equation in the
nice paper \cite{MS02}, which inspired us a lot to start this work.
However, our approach also easily covers the case of arbitrary
multiplicative noise, whereas in \cite{MS02} only additive noise was
considered. It is also straight forward to extend our new framework
to more noise terms, e.g. Levy noise (cf. \cite{G} for the classical
case). This and further new applications will be the subject of
future work.

Let us now state the precise conditions on the coefficients of
(\ref{SEE}):

Suppose  there exist constants
  $\alpha>1$, $\beta\ge 0$, $\theta>0$, $K$ and a positive adapted process $f\in
L^1([0,T]\times \Omega; \d
    t\times \mathbb{P})$ such that the
 following
 conditions hold for all $v,v_1,v_2\in V$ and $(t,\omega)\in [0,T]\times \Omega$.
\begin{enumerate}
    \item [$(H1)$] (Hemicontinuity) The map
     $ s\mapsto { }_{V^*}\<A(t,v_1+s
 v_2),v\>_V$
  is  continuous on $\mathbb{R}$.

    \item [$(H2)$] (Local monotonicity)
$$2 { }_{V^*}\<A(t,v_1)-A(t,v_2), v_1-v_2\>_V
    +\|B(t,v_1)-B(t,v_2)\|_{2}^2\\
     \le \left(K + \rho(v_2) \right)\|v_1-v_2\|_H^2,$$
where $\rho: V\rightarrow [0,+\infty)$ is a measurable function
and locally bounded in $V$.

\item [$(H3)$] (Coercivity)
    $$ 2 { }_{V^*}\<A(t,v), v\>_V +\|B(t,v)\|_{2}^2 +\theta
    \|v\|_V^{\alpha} \le f_t +K\|v\|_H^2.$$

\item[$(H4)$] (Growth)
$$ \|A(t,v)\|_{V^*}^{\frac{\alpha}{\alpha-1}} \le (f_t +
 K\|v\|_V^{\alpha} ) ( 1 +\|v\|_H^{\beta} ).$$
    \end{enumerate}

\begin{rem} (1) $(H2)$ is essentially weaker than the standard monotonicity  $(A2)$ (i.e. $\rho\equiv0$).
 One typical form of $(H2)$ in applications  is
$$\rho(v)=C\|v\|^\gamma,$$
where $\|\cdot\|$ is some norm on $V$ and $C,\gamma$ are some
constants.

One typical example is the stochastic 2-D  Navier-Stokes equation on
a bounded or unbounded domain, which satisfies $(H2)$ but does not
satisfy $(A2)$ (see Section 3). In fact, if $A(t,v)=\nu P_H\Delta
v-P_H \left[(v\cdot \nabla) v \right]$,
 we have
$$2 { }_{V^*}\<A(t,v_1)-A(t,v_2), v_1-v_2\>_V
     \le -\frac{\nu}{2}\|v_1-v_2\|_V^2+ \left(\nu +\frac{16}{\nu^3}\|v_2\|^4_{L^4}  \right)\|v_1-v_2\|_H^2.$$

(2) If the noise is zero or additive type in (\ref{SEE}),
then the existence and uniqueness of solutions to (\ref{SEE}) can be established  by
replacing $(H2)$ with the following more general type of local
monotonicity:
$$ { }_{V^*}\<A(t,v_1)-A(t,v_2), v_1-v_2\>_V
     \le \left(K +\eta(v_1)+ \rho(v_2) \right)\|v_1-v_2\|_H^2,$$
where $\eta,\rho: V\rightarrow [0,+\infty)$ are measurable functions
and locally bounded in $V$.

This will be investigated in a separated paper \cite{L}.

(3) $(H4)$ is also weaker than the  standard growth condition $(A4)$
(see the Appendix) assumed in the literature (cf.
\cite{KR79,Z90,PR07}). The advantage of $(H4)$ is, $e.g.$, to
include many semilinear type equations with nonlinear perturbation
terms. For example, if we consider a reaction-diffusion type
equation, $i.e.$  $A(u)=\Delta u+F(u)$, then for verifying  $(H3)$
we have $\alpha=2$. Hence $(A4)$ would imply that $F$ has at most
linear growth. However, we  can allow $F$ to have some polynomial
growth by using the weaker condition $(H4)$ here. We refer to
Section 3 for more details.
\end{rem}

\beg{defn} (Solution of SEE) A continuous $H$-valued
$(\F_t)$-adapted process $\{X_t\}_{t\in [0,T]}$ is called a
solution of $(\ref{SEE})$, if for its $\d t\otimes \P$-equivalent
class $\bar{X}$ we have
$$\bar{X}\in L^\alpha([0,T]\times \Omega, \d t\otimes\P; V)\cap L^2([0,T]\times \Omega, \d t\otimes\P; H)$$
and $\P-a.s.$,
$$X_t=X_0+\int_0^t A(s, \bar{X}_s)\d s+\int_0^t B(s, \bar{X}_s)\d W_s,\ t\in[0,T].  $$
\end{defn}

Now we can state the main result.

\beg{thm}\label{T1}
 Suppose $(H1)-(H4)$ hold for  $f\in L^{p/2}([0,T]\times \Omega; \d
    t\times \mathbb{P})$ with some $p\ge \beta+2$, and
 there exists a constant $C$  such that
\begin{equation}\begin{split}\label{c3}
& \|B(t,v)\|_2^2 \le C(f_t+\|v\|_H^2), \ t\in[0,T], v\in V; \\
 & \rho(v) \le C(1+\|v\|_V^\alpha) (1+\|v\|_H^\beta), \
v\in V.
\end{split}
\end{equation}
    Then for any $X_0\in L^{p}(\Omega\rightarrow H; \mathcal{F}_0;\mathbb{P})$
    $(\ref{SEE})$
    has a unique solution $\{X_t\}_{t\in [0,T]}$ and satisfies
$$\E\left(\sup_{t\in[0,T]}\|X_t\|_H^p+\int_0^T \|X_t\|_V^\alpha \d t \right)   < \infty.$$
Moreover, if
 $A(t,\cdot)(\omega), B(t,\cdot)(\omega)$  are independent of $t\in[0,T]$ and $\omega\in \Omega$,
then  the  solution $\{X_t\}_{t\in[0,T]}$ of (\ref{SEE}) is a Markov process.

\end{thm}

\section{Proof of the main theorem}

The first step of the proof  is mainly based on the Galerkin approximation.
Let
$$\{e_1,e_2,\cdots \}\subset V$$ be an orthonormal basis of $H$ and
 let $H_n:=span\{e_1,\cdots,e_n\}$ such that $span\{e_1,e_2,\cdots\}$ is dense in $V$. Let $P_n:V^*\rightarrow H_n$ be defined by
$$ P_ny:=\sum_{i=1}^n { }_{V^*}\<y,e_i\>_V e_i, \ y\in V^*.  $$
Obviously, $P_n|_H$ is just the orthogonal projection onto $H_n$ in $H$ and we have
$$ { }_{V^*}\<P_nA(t,u), v\>_V=\<P_nA(t,u),v\>_H={ }_{V^*}\<A(t,u),v\>_V, \ u\in V, v\in H_n.  $$
Let $\{g_1,g_2,\cdots \}$ be an orthonormal basis of $U$ and
$$ W^{(n)}_t:=\sum_{i=1}^n\<W_t,g_i\>_U g_i=\tilde{P}_n W_t, $$
where $\tilde{P}_n$ is the orthogonal projection onto $span\{g_1,\cdots,g_n\}$ in $U$.

Then for each
finite $n\in \mathbb{N}$  we consider the following stochastic
equation on $H_n$
\begin{equation}\label{approximation}
\d X_t^{(n)}=P_n A(t,X_t^{(n)}) \d t+ P_n B(t,X_t^{(n)}) \d
W_t^{(n)},\ X_0^{(n)}=P_n X_0.
\end{equation}
By the classical result for the solvability of SDE in finite-dimensional space (cf. \cite{KR79,PR07}) we know that
(\ref{approximation}) has a unique  strong solution.

In order to construct the solution of (\ref{SEE}), we need some a
priori estimates for $X^{(n)}$. For  convenience we use following
notations:
\begin{equation*}
\begin{split}
 &K=L^\alpha([0,T]\times \Omega\rightarrow V; \d t\times \P);\\
&K^*=L^{\frac{\alpha}{\alpha-1}}([0,T]\times \Omega\rightarrow V^*; \d t\times \P);\\
&J=L^2([0,T]\times \Omega\rightarrow L_2(U;H); \d t\times \P).
\end{split}
\end{equation*}

\begin{lem}\label{L1}
Under the assumptions in Theorem \ref{T1}, there exists $C>0$ such
that for all $n\in \mathbb{N}$
$$ \|X^{(n)}\|_K + \sup_{t\in[0,T]} \E \|X^{(n)}_t\|_H^2 \le C.     $$
\end{lem}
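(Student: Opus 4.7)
The plan is to apply Itô's formula to $\|X^{(n)}_t\|_H^2$ in the finite-dimensional space $H_n$, then use the coercivity condition $(H3)$ and Gronwall's inequality. Since $X^{(n)}$ solves an SDE on the finite-dimensional Hilbert space $H_n$ with $C$-coefficients in appropriate norms, the standard (finite-dimensional) Itô formula yields
\begin{equation*}
\|X^{(n)}_t\|_H^2 = \|P_n X_0\|_H^2 + \int_0^t \left(2\<P_n A(s,X^{(n)}_s),X^{(n)}_s\>_H + \|P_n B(s,X^{(n)}_s)\tilde P_n\|_2^2\right)\d s + M^{(n)}_t,
\end{equation*}
where $M^{(n)}_t = 2\int_0^t \<X^{(n)}_s,P_n B(s,X^{(n)}_s)\d W^{(n)}_s\>_H$ is a local martingale.

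Next I would use two simplifications available from the construction: $\<P_n A(s,u),v\>_H = { }_{V^*}\<A(s,u),v\>_V$ for $v\in H_n$, which turns the drift term into the genuine $V^*$--$V$ dualization; and $\|P_n B(s,u)\tilde P_n\|_2^2 \le \|B(s,u)\|_2^2$ since both $P_n$ and $\tilde P_n$ are orthogonal projections. Applying $(H3)$ pointwise then gives
\begin{equation*}
2{ }_{V^*}\<A(s,X^{(n)}_s),X^{(n)}_s\>_V + \|P_n B(s,X^{(n)}_s)\tilde P_n\|_2^2 + \theta\|X^{(n)}_s\|_V^{\alpha} \le f_s + K\|X^{(n)}_s\|_H^2.
\end{equation*}

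To legitimately take expectation and kill the martingale term, I would introduce stopping times $\tau_m^{(n)} := \inf\{t\in[0,T]:\|X^{(n)}_t\|_H \ge m\}\wedge T$, evaluate at $t\wedge \tau_m^{(n)}$, take expectation (so $\E M^{(n)}_{t\wedge\tau_m^{(n)}}=0$), and then obtain
\begin{equation*}
\E\|X^{(n)}_{t\wedge\tau_m^{(n)}}\|_H^2 + \theta\,\E\int_0^{t\wedge\tau_m^{(n)}}\|X^{(n)}_s\|_V^{\alpha}\d s \le \E\|X_0\|_H^2 + \E\int_0^T f_s\,\d s + K\int_0^t \E\|X^{(n)}_{s\wedge\tau_m^{(n)}}\|_H^2\,\d s,
\end{equation*}
using $\|P_n X_0\|_H\le\|X_0\|_H$. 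Gronwall's inequality applied to $t\mapsto \E\|X^{(n)}_{t\wedge\tau_m^{(n)}}\|_H^2$ yields a bound independent of $m$ and $n$; letting $m\to\infty$ and invoking Fatou's lemma on the left-hand side gives $\sup_{t\in[0,T]}\E\|X^{(n)}_t\|_H^2 \le C$. Feeding this back into the coercivity inequality controls $\E\int_0^T\|X^{(n)}_s\|_V^{\alpha}\d s$, which is $\|X^{(n)}\|_K^{\alpha}$.

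The only subtlety is the martingale localization together with the verification that the stopping times $\tau_m^{(n)}$ can be sent to infinity; this is routine because $X^{(n)}$ takes values in the finite-dimensional space $H_n$ and is a strong solution of a classical SDE with locally bounded coefficients, so $\tau_m^{(n)}\uparrow T$ $\P$-a.s. Apart from that, no deep estimate is needed here: the bound uses only $(H3)$ and $f\in L^1([0,T]\times\Omega)$, so the stronger integrability $f\in L^{p/2}$ and the assumption \eqref{c3} are not needed for this particular lemma; they will enter later when establishing the $L^p$-moment of $\sup_{t\le T}\|X^{(n)}_t\|_H$.
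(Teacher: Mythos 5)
Your argument is correct and is exactly the proof the paper has in mind: the paper disposes of this lemma by citing \cite[Lemma 4.2.9]{PR07}, which is precisely the finite-dimensional It\^o formula for $\|X^{(n)}_t\|_H^2$ combined with $(H3)$, localization, and Gronwall's lemma as you carry out. Your closing observation that only $(H3)$ and $f\in L^1([0,T]\times\Omega)$ are needed here also matches the paper's remark that ``the conclusion follows from $(H3)$.''
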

\begin{proof} The conclusion follows from
 $(H3)$ by using  the same argument as in \cite[Lemma 4.2.9]{PR07}.
Hence we omit the details here.
\end{proof}

\begin{lem}\label{L2}
 Under the assumptions in Theorem \ref{T1}, there exists $C>0$ such
that for all $n\in \mathbb{N}$
 we have
\begin{equation}\label{l2}
\E\sup_{t\in[0,T]}\|X_t^{(n)}\|_H^p +\E\int_0^T \|X_t^{(n)}\|_H^{p-2}\|X_t^{(n)}\|_V^\alpha \d t
 \le C\left(\E\|X_0\|_H^p+ \E\int_0^T f_t^{p/2}\d t\right).
\end{equation}
In particular, there exists $C>0$ such
that for all $n\in \mathbb{N}$
$$  \|A(\cdot, X^{(n)})\|_{K^*} \le C.     $$
\end{lem}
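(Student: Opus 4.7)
The plan is to apply the finite-dimensional Itô formula to $x\mapsto \|x\|_H^p$ along the $H_n$-valued semimartingale $X^{(n)}$, and then close a Gronwall estimate for $\mathbb{E}\sup_{s\le t}\|X_s^{(n)}\|_H^p$ by combining coercivity $(H3)$, the quadratic bound $\|B(t,v)\|_2^2\le C(f_t+\|v\|_H^2)$ from \eqref{c3}, and Burkholder--Davis--Gundy. Throughout one localizes with $\tau_N = \inf\{t:\|X_t^{(n)}\|_H\ge N\}$ to ensure integrability before taking expectations; the a priori bound \eqref{l2} then follows by monotone convergence as $N\to\infty$.

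Concretely, Itô's formula gives
\begin{equation*}
\begin{split}
\|X_t^{(n)}\|_H^p &= \|P_nX_0\|_H^p + \tfrac{p}{2}\int_0^t \|X_s^{(n)}\|_H^{p-2}\bigl[2\,{}_{V^*}\langle A(s,X_s^{(n)}),X_s^{(n)}\rangle_V + \|P_nB(s,X_s^{(n)})\tilde P_n\|_2^2\bigr]\,ds \\
&\quad + \tfrac{p(p-2)}{2}\int_0^t\|X_s^{(n)}\|_H^{p-4}\|(P_nB(s,X_s^{(n)})\tilde P_n)^* X_s^{(n)}\|_U^2\,ds + M_t^{(n)},
\end{split}
\end{equation*}
where $M^{(n)}$ is the stochastic integral against $W^{(n)}$. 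Applying $(H3)$ to the bracketed term, using $\|(P_nB\tilde P_n)^* x\|_U\le \|B\|_2\|x\|_H$ in the correction term, invoking $\|B\|_2^2\le C(f_s+\|X\|_H^2)$, and splitting $\|X\|_H^{p-2}f_s\le c(\|X\|_H^p+f_s^{p/2})$ by Young yields, with $C$ independent of $n$,
\begin{equation*}
\|X_t^{(n)}\|_H^p + \tfrac{p\theta}{2}\int_0^t \|X_s^{(n)}\|_H^{p-2}\|X_s^{(n)}\|_V^\alpha\,ds \le \|X_0\|_H^p + C\int_0^t\bigl(\|X_s^{(n)}\|_H^p+f_s^{p/2}\bigr)\,ds + M_t^{(n)}.
\end{equation*}

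For the supremum of $M^{(n)}$, BDG together with the quadratic variation bound $d\langle M^{(n)}\rangle_s \le c\,\|X_s^{(n)}\|_H^{2p-2}\|B(s,X_s^{(n)})\|_2^2\,ds$ gives, after pulling $\sup_{s\le t\wedge\tau_N}\|X_s^{(n)}\|_H^{p/2}$ out of the square root and applying Young,
\begin{equation*}
\mathbb{E}\sup_{s\le t\wedge\tau_N}|M_s^{(n)}| \le \tfrac12\,\mathbb{E}\sup_{s\le t\wedge\tau_N}\|X_s^{(n)}\|_H^p + C\,\mathbb{E}\int_0^{t\wedge\tau_N}\bigl(\|X_s^{(n)}\|_H^p+f_s^{p/2}\bigr)\,ds.
\end{equation*}
Inserting this into the previous display, absorbing the $\sup$-term, and applying Gronwall's lemma delivers the first bound in \eqref{l2} upon letting $N\to\infty$. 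The second assertion follows from $(H4)$: $\mathbb{E}\int_0^T\|A(t,X_t^{(n)})\|_{V^*}^{\alpha/(\alpha-1)}\,dt \le \mathbb{E}\int_0^T(f_t+K\|X_t^{(n)}\|_V^\alpha)(1+\|X_t^{(n)}\|_H^\beta)\,dt$; since $p\ge\beta+2$ implies $\|X\|_H^\beta\le 1+\|X\|_H^{p-2}$, the cross-term $\|X\|_V^\alpha\|X\|_H^\beta$ is controlled by the mixed integral in \eqref{l2}, while the $f_t(1+\|X\|_H^\beta)$ part is finite by Hölder using $f\in L^{p/2}$ and $\mathbb{E}\sup_t\|X_t^{(n)}\|_H^p<\infty$.

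The main delicate point is the BDG/Young absorption: one must ensure the stopping-time localization carries through so that the $\sup$-term absorbed on the left is actually finite at the truncated stage, and the exponent arithmetic forced by $p\ge\beta+2$ has to be tracked in each Young split so that only powers already controlled by \eqref{l2} appear on the right-hand side before Gronwall is invoked.
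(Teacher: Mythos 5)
Your proposal is correct and follows essentially the same route as the paper: Itô's formula for $\|x\|_H^p$, coercivity $(H3)$ together with the bound (\ref{c3}) on $\|B\|_2^2$ and Young's inequality, a stopping-time localization, a Burkholder--Davis--Gundy estimate with the supremum absorbed on the left, Gronwall, monotone convergence, and finally $(H4)$ with $p\ge\beta+2$ for the bound on $\|A(\cdot,X^{(n)})\|_{K^*}$. The only differences are cosmetic (e.g.\ the exact splitting inside the BDG square root), so no further comment is needed.
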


\begin{proof}

 By   It\^{o}'s formula,  Young's inequality and (\ref{c3}) we have
\beq\begin{split}\label{Ito estimate 2}
\|X_t^{(n)}\|_H^p=&\|X_0^{(n)}\|_H^p
+p(p-2)\int_0^t\|X_s^{(n)}\|_H^{p-4}\|
(P_nB(s,X_s^{(n)})\tilde{P}_n)^* X_s^{(n)}\|_{H}^2 \d s\\
&+\frac{p}{2}\int_0^t\|X_s^{(n)}\|_H^{p-2}\left( 2 {
}_{V^*}\<A(s,X_s^{(n)},X_s^{(n)}\>_V+\|P_nB(s,X_s^{(n)})\tilde{P}_n\|_{2}^2\right)
\d s
\\
&+p\int_0^t\|X_s^{(n)}\|_H^{p-2}\<X_s^{(n)},P_n B(s,X_s^{(n)})\d
W_s^{(n)}\>_H\\
\le & \|X_0\|_H^p- \frac{p\theta}{2} \int_0^t  \|X_s^{(n)}\|_H^{p-2}\|X_s^{(n)}\|_V^{\alpha}\d s \\
& + C \int_0^t \left( \|X_s^{(n)}\|_H^{p}
+ f_s\cdot \|X_s^{(n)}\|_H^{p-2}\right)\d s\\
&+ p\int_0^t\|X_s^{(n)}\|_H^{p-2}\<X_s^{(n)}, P_nB(s,X_s^{(n)})\d
W_s^{(n)}\>_H\\
\le & \|X_0\|_H^p - \frac{p\theta}{2}\int_0^t  \|X_s^{(n)}\|_H^{p-2}\|X_s^{(n)}\|_V^{\alpha}\d s\\
&+ C \int_0^t\left(\|X_s^{(n)}\|_H^{p}+ f_s^{p/2}\right)\d s\\
&+ p\int_0^t\|X_s^{(n)}\|_H^{p-2}\<X_s^{(n)}, P_nB(s,X_s^{(n)})\d
W_s^{(n)}\>_H, \  \ t\in[0,T],
\end{split}
\end{equation}
where $C$ is a generic constant (independent of $n$) and  may change
from line to line.

For any given $n$ we define the stopping time
$$  \tau_R^{(n)}=\inf\{t\in[0,T]: \|X_t^{(n)}\|_H>R   \}\wedge T , \ R>0. $$
Here we take $\inf\emptyset=\infty$. It's obvious that
$$ \lim_{R\rightarrow\infty} \tau_R^{(n)}=T,\  \P-a.s.,\  n\in\mathbb{N}.  $$

Then by  the Burkholder-Davis-Gundy inequality  we have
\begin{equation}\begin{split}\label{estimate 3}
  & \E\sup_{r\in[0,t]}\left|\int_0^r \|X_s^{(n)}\|_H^{p-2} \<X_s^{(n)},P_n B(s,X_s^{(n)})\d W_s^{(n)}\>_H  \right| \\
   \le & 3\E\left(\int_0^t\|X_s^{(n)}\|_H^{2p-2} \|B(s,X_s^{(n)})\|_{2}^2 \d s  \right)^{1/2}\\
   \le & 3\E\left( \sup_{s\in[0,t]}\|X_s^{(n))}\|_H^{2p-2}\cdot
C\int_0^t \left(\|X_s^{(n))}\|_H^2+f_s\right) \d s  \right)^{1/2}\\
   \le & 3 \E \left[ \varepsilon \sup_{s\in[0,t]}\|X_s^{(n)}\|_H^{p}+ C_\varepsilon \left( \int_0^t (\|X_s^{(n)}\|_H^2+f_s) \d s  \right)^{p/2}    \right] \\
\le & 3 \varepsilon \E\sup_{s\in[0,t]}\|X_s^{(n)}\|_H^{p}+3\cdot
(2T)^{p/2 -1} C_\varepsilon \E\int_0^t \left(\|X_s^{(n)}\|_H^p+
f_s^{p/2} \right)\d s,\ t\in[0, \tau_R^{(n)}],
\end{split}
\end{equation}
where $\varepsilon>0$ is a small constant and $C_\varepsilon$ comes from  Young's
inequality.

 Then by  $(\ref{Ito estimate 2})$,   $(\ref{estimate 3})$  and Gronwall's
lemma   we have
$$\E\sup_{t\in[0, \tau_R^{(n)}]}\|X_t^{(n)}\|_H^p +\E\int_0^{\tau_R^{(n)}}  \|X_s^{(n)}\|_H^{p-2}\|X_s^{(n)}\|_V^{\alpha}\d s
\le C\left(\E\|X_0\|_H^p+ \E\int_0^T f_s^{p/2}\d s\right), \ n\ge 1,$$
where $C$ is a constant independent of $n$.

For $R\rightarrow\infty$, (\ref{l2}) follows from the monotone
convergence theorem.

Moreover, by $(H4)$ and $p\ge \beta+2$ we have
$$  \|A(\cdot, X^{(n)})\|_{K^*} \le C, \ n\ge 1,     $$
where $C$ is a constant independent of $n$.
\end{proof}

\textbf{Proof of Theorem \ref{T1}.}
(1) Existence: By Lemmas \ref{L1} and \ref{L2} there exists a
subsequence $n_k\rightarrow \infty$ such that

(i) $X^{(n_k)}\rightarrow \bar{X}$ weakly in $K$ and weakly star in
$L^p(\Omega; L^\infty([0,T];H))$.

(ii) $Y^{(n_k)}:=A(\cdot,X^{(n_k)})\rightarrow Y$ weakly in $K^*$.

(iii) $Z^{(n_k)}:=P_{n_k} B(\cdot,X^{(n_k)})\rightarrow Z$ weakly in
$J$ and hence
$$    \int_0^\cdot P_{n_k} B(s,X^{(n_k)}_s)\d W_s^{(n_k)}\rightarrow \int_0^\cdot Z_s \d W_s     $$
weakly in $L^\infty([0,T], \d t; L^2(\Omega,\P; H)) $.

Now we define \begin{equation}
 X_t:=X_0+\int_0^t Y_s \d
s+\int_0^t Z_s \d W_s, \ t\in [0,T],
\end{equation}
it is easy to show that $X=\bar{X} \  \d t\otimes\P$-a.e.

Then by \cite[Theorem 4.2.5]{PR07} we know that $X$ is an $H$-valued continuous $(\mathcal{F}_t)$-adapted
process and
$$\E\left(\sup_{t\in[0,T]}\|X_t\|_H^p+\int_0^T \|X_t\|_V^\alpha \d t \right)   < \infty.$$
Therefore, it remains to verify that
$$   A(\cdot,\bar{X})=Y, \   B(\cdot,\bar{X})=Z \       \d t\otimes\P -a.e.  $$
Define
$$ \mathcal{M}=\bigg\{\phi: \phi\ \text{is}\ V\text{-valued}\ (\mathcal{F}_t)\text{-adapted process such that}\   \E\int_0^T\rho(\phi_s) ds<\infty  \bigg\}.   $$

For $  \phi\in K\cap\mathcal{M} \cap L^p(\Omega; L^\infty([0,T];H))$,
\begin{align}
& \E\left( e^{-\int_0^t(K+\rho(\phi_s)  )\d
s}\|X_t^{(n_k)}\|_H^2 \right)- \E\left(\|X_0^{(n_k)}\|_H^2\right)\\
\nonumber
 =& \E \bigg[ \int_0^t
e^{-\int_0^s(K+\rho(\phi_r)  )\d r} \bigg(
 2{ }_{V^*}\<A(s,X^{(n_k)}_s), X_s^{(n_k)}  \>_V +\|P_{n_k}B(s,X_s^{(n_k)})\tilde{P}_{n_k}\|_2^2\\
 \nonumber
& -(K+\rho(\phi_s) )\|X_s^{(n_k)}\|_H^2
 \bigg)\d s \bigg]\\ \nonumber
 \le & \E \bigg[ \int_0^t
e^{-\int_0^s(K+\rho(\phi_r)  )\d r} \bigg(
 2{ }_{V^*}\<A(s,X^{(n_k)}_s), X_s^{(n_k)}  \>_V +\|B(s,X_s^{(n_k)})\|_2^2\\
 \nonumber
& -(K+\rho(\phi_s) )\|X_s^{(n_k)}\|_H^2
 \bigg)\d s \bigg]\\ \nonumber
=& \E \bigg[ \int_0^t e^{-\int_0^s(K+\rho(\phi_r)  )\d r} \bigg(
 2{ }_{V^*}\<A(s,X^{(n_k)}_s)-A(s,\phi_s), X_s^{(n_k)}-\phi_s  \>_V \\
 \nonumber
 &+\|B(s,X_s^{(n_k)})-B(s,\phi_s)\|_2^2-(K+\rho(\phi_s) )\|X_s^{(n_k)}
 -\phi_s\|_H^2
 \bigg)\d s \bigg]\\ \nonumber
&+\E \bigg[ \int_0^t e^{-\int_0^s(K+\rho(\phi_r)  )\d r} \bigg(
 2{ }_{V^*}\<A(s,X^{(n_k)}_s)-A(s,\phi_s), \phi_s  \>_V + 2{ }_{V^*}\<A(s,\phi_s), X_s^{(n_k)}
 \>_V\\ \nonumber
& -\|B(s,\phi_s)\|_2^2+2\<B(s,X_s^{(n_k)}), B(s,\phi_s)
\>_{L_2(U,H)}\\ \nonumber &
 -2(K+\rho(\phi_s) )\<X_s^{(n_k)}, \phi_s\>_H
+(K+\rho(\phi_s) )\|\phi_s\|_H^2
 \bigg)\d s \bigg].
\end{align}
Let $k\rightarrow\infty$, by $(H2)$ and the lower semicontinuity
(cf. e.g. \cite[(4.2.27)]{PR07} for details) we have for every
nonnegative $ \psi\in L^\infty([0,T]; \d t)$,
\begin{align}\label{e9}
& \E\left[\int_0^T \psi_t \left( e^{-\int_0^t(K+\rho(\phi_s)
)\d s}\|X_t\|_H^2 - \|X_0\|_H^2  \right)\d t \right]\\
\nonumber
 \le & \liminf_{k\rightarrow\infty} \E\left[\int_0^T \psi_t \left( e^{-\int_0^t(K+\rho(\phi_s)
)\d s}\|X_t^{(n_k)}\|_H^2 - \|X_0^{(n_k)}\|_H^2  \right)\d t \right]\\
\nonumber \le &\E \bigg[\int_0^T \psi_t \bigg( \int_0^t
e^{-\int_0^s(K+\rho(\phi_r)  )\d r} \bigg(
 2{ }_{V^*}\<Y_s-A(s,\phi_s), \phi_s  \>_V \\ \nonumber
& + 2{ }_{V^*}\<A(s,\phi_s), \bar{X}_s
 \>_V -\|B(s,\phi_s)\|_2^2+2\<Z_s, B(s,\phi_s) \>_{L_2(U,H)}\\
\nonumber &
 -2(K+\rho(\phi_s) )\<X_s, \phi_s\>_H
+(K+\rho(\phi_s) )\|\phi_s\|_H^2
 \bigg)\d s \bigg) \d t \bigg].
\end{align}
By It\^o's formula we have for  $  \phi\in K\cap\mathcal{M} \cap L^p(\Omega; L^\infty([0,T];H))$,
\begin{align}\label{e3}
& \E\left( e^{-\int_0^t(K+\rho(\phi_s) )\d s}\|X_t\|_H^2
\right)- \E\left(\|X_0\|_H^2\right)\\ \nonumber
 =& \E \left[ \int_0^t
e^{-\int_0^s(K+\rho(\phi_r) )\d r} \left(
 2{ }_{V^*}\<Y_s, \bar{X}_s  \>_V +\|Z_s\|_2^2-(K+ \rho(\phi_s) )\|X_s\|_H^2 \right)\d s
\right].
\end{align}

By inserting (\ref{e3}) into (\ref{e9}) we obtain
\begin{align}
0\ge & \E\bigg[\int_0^T \psi_t \bigg( \int_0^t
e^{-\int_0^s(K+\rho(\phi_r)  )\d r}(
 2{ }_{V^*}\<Y_s-A(s,\phi_s), \bar{X}_s- \phi_s  \>_V \\ \nonumber
 & +\|B(s,\phi_s)-Z_s\|_2^2
 -(K+\rho(\phi_s) )\|X_s- \phi_s\|_H^2 )\d s \bigg) \d t \bigg].
\end{align}
Note that (\ref{c3}), Lemmas \ref{L1} and \ref{L2} imply that
$$ \bar{X}\in K\cap\mathcal{M} \cap L^p(\Omega; L^\infty([0,T];H)). $$
By taking $\phi=\bar{X}$
 we obtain that $Z=B(\cdot,\bar{X})$. Next,
 we first take $\phi=\bar{X}-\varepsilon\tilde{\phi} v$ for
$\tilde{\phi}\in L^\infty([0,T]\times\Omega; \d t\otimes\P;
\mathbb{R})$ and $v\in V$, then we divide by $\varepsilon$ and let
$\varepsilon\rightarrow 0$ to derive that
\begin{equation}
0\ge \E\bigg[\int_0^T \psi_t \bigg( \int_0^t
e^{-\int_0^s(K+\rho(\bar{X}_r) )\d r}\tilde{\phi}_s
 { }_{V^*}\<Y_s-A(s,\bar{X}_s), v  \>_V \d s \bigg) \d t \bigg].
 \end{equation}
By the arbitrariness of $\psi$ and $\tilde{\phi}$, we conclude that
$ Y=A(\cdot, \bar{X}) $.

Hence $\bar{X}$ is a solution of (\ref{SEE}).

(2) Uniqueness: Suppose $X_t,Y_t$ are the solutions of (\ref{SEE}) with initial conditions
$X_0,Y_0$ respectively, i.e.
 \begin{equation}\begin{split}
 X_t&=X_0+\int_0^t A(s,X_s) \d s+\int_0^t B(s,X_s) \d W_s, \ t\in[0,T];\\
  Y_t&=Y_0+\int_0^t A(s,Y_s) \d s+\int_0^t B(s,Y_s) \d W_s, \ t\in[0,T].
\end{split}
\end{equation}
Then by the product rule, It\^{o}'s formula and $(H2)$ we have
 \begin{equation*}\begin{split}
 & e^{-\int_0^t(K+\rho(Y_s))\d s}\|X_t-Y_t\|_H^2\\
\le & \|X_0-Y_0\|_H^2
+2\int_0^t  e^{-\int_0^s(K+\rho(Y_r))\d r} \<X_s-Y_s, B(s,X_s)\d W_s-B(s,Y_s)\d W_s  \>_H,\ t\in[0,T].
\end{split}
\end{equation*}
By a standard localization argument we have
$$ \E\left[ e^{-\int_0^t(K+\rho(Y_s))\d s}\|X_t-Y_t\|_H^2 \right] \le \E \|X_0-Y_0\|_H^2, \  t\in[0,T].   $$
If $X_0=Y_0, \P -a.s.$, then
$$ \E\left[e^{-\int_0^t(K+\rho(Y_s))\d s}\|X_t-Y_t\|_H^2\right]=0, \  t\in[0,T].  $$
Since (\ref{c3}) and Lemmas \ref{L1}, \ref{L2} imply that
$$ \int_0^t(K+\rho(Y_s))\d s< \infty, \ \P-a.s., \ t\in[0,T],  $$
we have
$$  X_t=Y_t, \ \P-a.s., \ t\in[0,T]. $$
Therefore, the pathwise uniqueness follows from the path continuity of $X,Y$ in $H$.

(3) Markov property: the proof of Markov property is standard, we refer to \cite[Proposition 4.3.5]{PR07}
or \cite[Theorem II.2.4]{KR79}.
\qed

 \section{Application to examples}
Obviously, the main result can be applied to  stochastic evolution
equations with monotone coefficients (cf. \cite{PR07} for the
stochastic porous medium equation and stochastic $p$-Laplace
equation) and non-monotone perturbations ($e.g.$ some locally
Lipschitz perturbation) in the drift. Below we  present some
examples where the coefficients are only locally monotone, hence the
classical result of monotone operators cannot be applied.

 In this section  we use the notation $D_i$ to denote the spatial derivative $\frac{\partial}{\partial x_i}$,
$\Lambda \subseteq \mathbb{R}^d$ is an open bounded domain with smooth
boundary.
 For the standard Sobolev space $W_0^{1,p}(\Lambda)$ $(p\ge 2)$  we always use the following (equivalent) Sobolev norm:
$$    \|u\|_{1,p}:=\left(\int_\Lambda |\nabla u(x)|^p d x\right)^{1/p}.      $$
For simplicity we only consider  examples where the coefficients are
time independent, but one can easily adapt those examples to the
time dependent case.


\begin{lem}\label{L3.1}
Consider the Gelfand triple
$$ V:=W_0^{1,2}(\Lambda)\subseteq H:=L^2(\Lambda) \subseteq  W^{-1,2}(\Lambda)  $$
and the operator
$$ A(u)=\Delta u+ \sum_{i=1}^d f_i(u)D_i u,  $$
where $f_i$ ($i=1,\cdots,d$) are bounded Lipschitz functions on $\mathbb{R}$.

$(1)$ If $d<3$,  then there exists a constant
$K$ such that
$$2 { }_{V^*}\<A(u)-A(v), u-v\>_V
   \le - \|u-v\|_V^2+  \left(K  + K\|v\|_V^2  \right)\|u-v\|_H^2,\ u,v\in V.$$

$(2)$ If $d=3$, then there exists a constant
$K$ such that
$$2 { }_{V^*}\<A(u)-A(v), u-v\>_V
   \le - \|u-v\|_V^2+  \left(K + K\|v\|_V^4 \right)\|u-v\|_H^2,\ u,v\in V.$$

$(3)$ If $f_i$ are  independent of $u$ for $i=1,\cdots,d$, i.e.
$$ A(u)=\Delta u+ \sum_{i=1}^d f_i\cdot D_i u,  $$
then for any $d\ge 1$ we have
$$2 { }_{V^*}\<A(u)-A(v), u-v\>_V
   \le - \|u-v\|_V^2+  K\|u-v\|_H^2,\ u,v\in V.$$
\end{lem}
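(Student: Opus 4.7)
Set $w := u - v \in W_0^{1,2}(\Lambda)$ and write
\[
A(u)-A(v) \;=\; \Delta w \;+\; \sum_{i=1}^d \bigl[f_i(u)\,D_i w \;+\; (f_i(u)-f_i(v))\,D_i v\bigr].
\]
Integration by parts on the linear part (using $w\in W_0^{1,2}$ so the boundary terms vanish) yields $2\,{ }_{V^*}\langle\Delta w,w\rangle_V = -2\|w\|_V^2$. The whole strategy is to use this $-2\|w\|_V^2$ as a budget against which $\varepsilon\|w\|_V^2$-type remainders from the first-order piece can be absorbed, leaving exactly $-\|w\|_V^2$.

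Let $M=\max_i\|f_i\|_\infty$ and let $L$ be a common Lipschitz constant. The first convective contribution is controlled \emph{independently of $d$} by Cauchy--Schwarz and Young,
\[
\Bigl|\int_\Lambda f_i(u)\,D_i w\cdot w\,dx\Bigr| \;\le\; M\|D_iw\|_{L^2}\|w\|_H \;\le\; \varepsilon\,\|D_iw\|_{L^2}^2 + C_\varepsilon\|w\|_H^2,
\]
so summing over $i$ costs a fraction of the Laplacian budget and produces only a harmless $K\|w\|_H^2$. The second convective contribution is where the Lipschitz hypothesis and dimension enter; by $|f_i(u)-f_i(v)|\le L|w|$ and H\"older,
\[
\Bigl|\int_\Lambda (f_i(u)-f_i(v))\,D_i v\cdot w\,dx\Bigr| \;\le\; L\,\|w\|_{L^4}^2\,\|D_iv\|_{L^2}.
\]

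To close the estimate I use the Gagliardo--Nirenberg inequality
$\|w\|_{L^4}^2 \le C\,\|w\|_H^{2-d/2}\|w\|_V^{d/2}$ valid for $d\le 4$ and $w\in W_0^{1,2}(\Lambda)$. For $d\le 2$ this gives $\|w\|_{L^4}^2\|D_iv\|_{L^2}\le C\|w\|_H\|w\|_V\|v\|_V$, and Young's inequality then yields $\varepsilon\|w\|_V^2 + C_\varepsilon\|w\|_H^2\|v\|_V^2$, which is case~(1). For $d=3$ the interpolation produces $C\|w\|_H^{1/2}\|w\|_V^{3/2}\|v\|_V$, and Young's inequality with conjugate exponents $(4/3,4)$ isolates the $\|w\|_V^{3/2}$ factor to give $\varepsilon\|w\|_V^2 + C_\varepsilon\|w\|_H^2\|v\|_V^4$, which is case~(2). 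Choosing $\varepsilon$ small enough so that both $\varepsilon$-multiples of $\|w\|_V^2$ sum to at most $1$, we absorb them into $-2\|w\|_V^2$ to keep the leading coefficient $-\|w\|_V^2$. Case~(3) degenerates: since $f_i(u)-f_i(v)\equiv 0$, the second convective piece disappears entirely, so neither the Lipschitz constant nor Gagliardo--Nirenberg interpolation is needed and the argument closes uniformly in $d$. The only delicate step is the bookkeeping of Young exponents for $d=3$; this is what forces the quartic weight $\|v\|_V^4$ in the local monotonicity bound rather than a quadratic one.
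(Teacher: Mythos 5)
Your argument is correct and follows essentially the same route as the paper: the same splitting $f_i(u)D_iu-f_i(v)D_iv=f_i(u)D_iw+(f_i(u)-f_i(v))D_iv$, Cauchy--Schwarz plus boundedness for the first piece, the Lipschitz bound and H\"older giving $L\|w\|_{L^4}^2\|v\|_V$ for the second, and then the Ladyzhenskaya-type interpolation $\|w\|_{L^4}^2\le C\|w\|_H^{2-d/2}\|w\|_V^{d/2}$ with Young's inequality (exponents $(4/3,4)$ when $d=3$, whence the $\|v\|_V^4$ weight), absorbing the $\varepsilon\|w\|_V^2$ remainders into the Laplacian's contribution. The paper cites the specific $d=2$ and $d=3$ inequalities from Menaldi--Sritharan rather than the general Gagliardo--Nirenberg form, but the content is identical, including the observation that case (3) needs no interpolation because the difference term vanishes.
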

\begin{proof}  (1)
 Since all $f_i$ are bounded and Lipschitz, we have
\begin{equation}
 \begin{split}
\label{e3.1}
& ~~~~ { }_{V^*}\<A(u)-A(v), u-v\>_V \\
 &= - \|u-v\|_V^2+ \sum_{i=1}^d \int_\Lambda \left(f_i(u)D_i u-f_i(v)D_i v\right)\left(u-v\right) d
 x\\
&= - \|u-v\|_V^2+ \sum_{i=1}^d \int_\Lambda \left(f_i(u)(D_i u-D_i
v) +D_iv( f_i(u) -f_i(v))\right)\left(u-v\right) dx\\
&\le - \|u-v\|_V^2+ \sum_{i=1}^d \bigg[ \left( \int_\Lambda (D_i
u-D_i v)^2 d x\right)^{1/2} \left( \int_\Lambda f_i^2(u)(u-v)^2 dx
\right)^{1/2}\\
&~~ +\left( \int_\Lambda (D_i v)^2 d x\right)^{1/2} \left(
\int_\Lambda \left(f_i(u)-f_i(v)\right)^2
 (u-v)^2 dx \right)^{1/2} \bigg] \\
 &\le - \|u-v\|_V^2+ K \|u-v\|_V   \left(\int_\Lambda
 (u-v)^2 dx \right)^{1/2}    + K \| v\|_V \left(\int_\Lambda
 (u-v)^4 dx \right)^{1/2} \\
&\le -\frac{3}{4} \|u-v\|_V^2+ K \|u-v\|_H^2  + K \|v\|_V \|u-v\|_{L^4}^2,
 \ u,v\in V,
\end{split}
\end{equation}
where $K$ is a generic constant that may change from line to line.

For $d<3$, we have
the following well-known estimate on $\mathbb{R}^2$ (see \cite[Lemma
2.1]{MS02})
\begin{equation}\label{2d}
  \|u\|_{L^4}^4 \le 2   \|u\|_{L^2}^2  \|\nabla u\|_{L^2}^2, \ u\in W_0^{1,2}(\Lambda) .
\end{equation}
Hence combining with (\ref{e3.1}) we have
$$ { }_{V^*}\<A(u)-A(v), u-v\>_V \le -\frac{1}{2} \|u-v\|_V^2+  \left(K  + K\|v\|_V^2  \right)\|u-v\|_H^2,\ u,v\in V.$$

$(2)$ For $d=3$ we use the following estimate (cf.\cite{MS02})
\begin{equation}\label{e4}
  \|u\|_{L^4}^4 \le 4  \|u\|_{L^2}  \|\nabla u\|_{L^2}^3, \  u\in W_0^{1,2}(\Lambda),
\end{equation}
then the second assertion can be derived similarly from (\ref{e3.1}) and Young's inequality.

$(3)$ This assertion can be easily derived as (\ref{e3.1}).
\end{proof}

\beg{exa} (Semilinear stochastic equations)\\
Let $\Lambda$ be an open bounded domain in $\mathbb{R}^d$ with smooth boundary. We consider
the following triple
$$ V:=W^{1,2}_0(\Lambda) \subseteq H:=L^2(\Lambda)\subseteq (W^{1,2}_0(\Lambda))^*$$
and the semilinear stochastic  equation
 \begin{equation}\label{rde}
 \d X_t=\left(\Delta X_t+ \sum_{i=1}^d f_i(X_t)D_i X_t+ g(X_t)\right)\d t+B(X_t)\d W_t,
  \end{equation}
where  $W_t$ is a Wiener process on $L^2(\Lambda)$ and $f_i,g,B$ satisfy
 the following conditions:

(i)  $f_i$ are bounded  Lipschitz functions on $\mathbb{R}$ for $i=1,\cdots, d$;

(ii) $g$ is a continuous function  on $\mathbb{R}$ such that
\begin{equation}\label{c1}
\begin{split}
|g(x)| & \le C(|x|^r+1), \  x\in \mathbb{R};\\
 (g(x)-g(y))(x-y)&\le C(1+|y|^s)(x-y)^2, \  x,y\in \mathbb{R}.
\end{split}
     \end{equation}
where $C,r,s$ are some positive constants.

(iii) $B:V\rightarrow L_2(L^2(\Lambda))$ is Lipschitz.

Then we have the following result:

(1) If $d=1,r=3,s=2$, then for
 any $X_0\in L^{6}(\Omega, \mathcal{F}_0,\mathbb{P};H)$,
    $(\ref{rde})$
    has a unique solution $\{X_t\}_{t\in [0,T]}$ and this solution satisfies
\begin{equation}\label{solution estimate}
\E\left(\sup_{t\in[0,T]}\|X_t\|_H^6+\int_0^T \|X_t\|_V^2 \d t \right)   < \infty.
\end{equation}

(2) If $d=2,r=\frac{7}{3},s=2$, then for
 any $X_0\in L^{6}(\Omega, \mathcal{F}_0,\mathbb{P};H)$,
    $(\ref{rde})$
    has a unique solution $\{X_t\}_{t\in [0,T]}$ and this solution satisfies (\ref{solution estimate}).

(3) If $d= 3,r=\frac{7}{3},s=\frac{4}{3}$, $f_i, i=1,\cdots,d$ are
bounded measurable functions and independent of $u$,
 then for
 any $X_0\in L^{6}(\Omega, \mathcal{F}_0,\mathbb{P};H)$,
    $(\ref{rde})$
    has a unique solution $\{X_t\}_{t\in [0,T]}$ and this solution satisfies (\ref{solution estimate}).
\end{exa}

\begin{proof}
(1)  We define the operator
$$ A(u)=\Delta u+ \sum_{i=1}^d f_i(u)D_i u+g(u), \ u\in V.  $$

The hemicontinuity $(H1)$ follows easily from the continuity of $f$ and $g$.

Note that (\ref{c1}) and (\ref{2d}) imply
\begin{equation}
 \begin{split}
  { }_{V^*}\<g(u)-g(v),u-v\>_V
\le & C\left(1+ \|v\|_{L^{2s}}^s \right)\|u-v\|_{L^4}^2\\
\le & \frac{1}{4}\|u-v\|_V^2+C\left(
1+\|v\|_{L^{2s}}^{2s} \right) \|u-v\|_H^2, \
u,v\in V.
 \end{split}
\end{equation}
Therefore, by Lemma \ref{L3.1} we have for $d<3$
$$2 { }_{V^*}\<A(u)-A(v), u-v\>_V
   \le -\frac{1}{2} \|u-v\|_V^2+ C \left(1  + \|v\|_{V}^2 +\|v\|_{L^{2s}}^{2s}  \right)\|u-v\|_H^2,\ u,v\in V,$$
$i.e.$ $(H2)$, $(H3)$ hold with $\rho(v)= \|v\|_{V}^2
+\|v\|_{L^{2s}}^{2s} $ and  $\alpha=2$.

For $d=1, r=3$, by  the Sobolev embedding theorem we have
$$  \|g(u)\|_{V^*}\le C\left(1+\|u\|_{L^3}^3\right)\le C\left(1+\|u\|_V\|u\|_{H}^{2}\right), \ u\in V. $$
Then it is easy to show that
$$  \|A(u)\|_{V^*}\le  C\left(1+\|u\|_V+  \|u\|_V\|u\|_{H}^{2}\right), \ u\in V. $$
 Hence $(H4)$  holds with $\beta=4$.

Therefore, all assertions follow from Theorem \ref{T1} by taking
$p=6$.

(2) For $d=2,3$ we have
 $$  \|g(u)\|_{V^*}\le C\left(1+\|u\|_{L^{6r/5}}^{r}\right), \ u\in V. $$

For  $r= \frac{7}{3}$, by the interpolation theorem we have
$$ \|u\|_{L^{6r/5}}\le \|u\|_{L^2}^{4/7} \|u\|_{L^6}^{3/7}, \ u\in W_0^{1,2}(\Lambda) \subseteq L^6(\Lambda).  $$
Then
\begin{equation}\label{e5}  \|g(u)\|_{V^*}\le
C\left(1+\|u\|_{L^{6r/5}}^{r}\right)\le C\left(1+ \|u\|_H^{4/3}
\|u\|_{V} \right), \ u\in V.
\end{equation}
Hence $(H4)$ holds for $d=2,3$ with $\beta=8/3$.

Therefore, for $d=2$, all assertions follow from Theorem \ref{T1} by
taking $p=6$ (in fact, $p=14/3$ is enough).

(3) If $d=3$ and $f_i, i=1,2,3$ are bounded measurable functions and
independent of $u$, then by Lemma \ref{L3.1} and (\ref{e4})  we have
$$2 { }_{V^*}\<A(u)-A(v), u-v\>_V
   \le - \frac{1}{2}\|u-v\|_V^2+
K\left(1  +  \|v\|_{L^{2s}}^{4s} \right)\|u-v\|_H^2,\ u,v\in V.$$
Hence $(H2)$, $(H3)$ hold with $\rho(v)= \|v\|_{L^{2s}}^{4s}$ and
$\alpha=2$.

Since $s=\frac{4}{3}$, by the
interpolation inequality we have
$$   \|u\|_{L^{2s}} \le   \|u\|_{L^2}^{5/8}   \|u\|_{L^{6}}^{3/8}, \ u\in V.  $$
Therefore,
$$   \|u\|_{L^{2s}}^{4s} \le  C \|u\|_{H}^{10/3}   \|u\|_{V}^{2}, \ u\in V,  $$
i.e. (\ref{c3}) holds with $\beta=10/3$.

Hence combining with (\ref{e5}) we can  take $p=6$ (in fact, $p\ge
16/3$ is enough).

Then all assertions follow from Theorem \ref{T1}.
\end{proof}

\begin{rem}
(1) For some specific examples, one might derive the local
monotonicity without assuming the boundedness of $f_i, i=1,\cdots,
d$.  For instance,  Wilhelm Stannat (whom we like to thank for this
at this point) pointed out to us that our local monotonicity
condition is also fulfilled by the classical stochastic Burgers
equation. Since the remaining conditions hold anyway in this case,
all our results apply to the classical stochastic Burgers equation
as well. More precisely, for the classical stochastic Burgers
equation we have
$$d=1,\ \Lambda=[0, 1],\  A(u)=\Delta u+ u
\frac{\partial u}{\partial x},$$
 then we can derive the following local
monotonicity:
\begin{equation}
 \begin{split}
& ~~~~ { }_{V^*}\<A(u)-A(v), u-v\>_V \\
 &= - \|u-v\|_V^2+  \int_\Lambda \left( u \frac{\partial u}{\partial x} - v \frac{\partial v}{\partial x}\right)\left(u-v\right) d
 x\\
&= - \|u-v\|_V^2- \frac{1}{2} \int_\Lambda \left( u-v +2v \right)\left(u-v \right) \frac{\partial }{\partial x} \left(u-v\right) dx\\
&= - \|u-v\|_V^2- \int_\Lambda  v \left( u-v \right)\frac{\partial }{\partial x} \left(u-v\right) dx\\
&\le - \|u-v\|_V^2+  \|v\|_{L^4} \|u-v\|_{L^4} \|u-v\|_{V}\\
 &\le - \|u-v\|_V^2+ K \|v\|_{L^4} \|u-v\|_{H}^{1/2} \|u-v\|_{V}^{3/2} \\
&\le -\frac{3}{4} \|u-v\|_V^2+ K \|v\|_{L^4}^4 \|u-v\|_{H}^{2},
 \ u,v\in V,
\end{split}
\end{equation}
where $K$ is  some constant that may change from line to line.

(2) One obvious generalization is that one can replace  $\Delta $ in
(\ref{rde}) by the $p$-Laplace operator $ \mathbf{div}(|\nabla
u|^{p-2}\nabla u)$ or the more general quasi-linear differential
operator
$$ \sum_{|\alpha|\le m} (-1)^{|\alpha|}D_\alpha A_\alpha(D u),  $$
where $Du=(D_\beta u)_{|\beta|\le m}$. Under certain assumptions
(cf. \cite[Proposition 30.10]{Z90}) this operator satisfies the
monotonicity and coercivity condition. Then, according to Theorem
\ref{T1}, we can  obtain the existence and uniqueness of solutions
to this type of quasi-linear  SPDE with  non-monotone perturbations
($e.g.$ some locally Lipschitz lower order terms).
\end{rem}

Now we apply Theorem \ref{T1} to the stochastic 2-D Navier-Stokes equation.

Let $\Lambda$ be a bounded domain in $\mathbb{R}^2$ with smooth boundary. Define
$$ V=\left\{ v\in W_0^{1,2}(\Lambda,\mathbb{R}^2): \nabla \cdot v=0 \  a.e.\  \text{in} \ \Lambda   \right\}, \
\|v\|_V:=\left(\int_\Lambda |\nabla v|^2 dx  \right)^{1/2},
$$
and $H$ is the closure of $V$ in the following norm
$$ \|v\|_H:=\left(\int_\Lambda | v|^2 dx  \right)^{1/2}.$$
The linear operator $P_H$ (Helmhotz-Hodge projection) and $A$ (Stokes operator with viscosity constant
$\nu$) are defined by
$$ P_H: L^2(\Lambda, \mathbb{R}^2)\rightarrow H\ \  \text{ orthogonal projection}; $$
$$  A: W^{2,2}(\Lambda, \mathbb{R}^2)\cap V\rightarrow H, \ Au=\nu P_H \Delta u .  $$
It is well known then  the Navier-Stokes equation can be
reformulated as follows
\begin{equation}\label{NSE}
u'=Au+F(u)+f,\ u(0)=u_0\in H,
\end{equation}
where $f\in L^2(0,T;V^*)$ denotes some external force and
$$ F:  \mathcal{D}_F\subset H\times V\rightarrow H, \ F(u,v)=- P_H\left[\left(u \cdot \nabla\right) v\right],
F(u)=F(u,u).  $$ It is standard that  using the Gelfand triple
$$     V\subseteq H\equiv H^*\subseteq V^*,   $$
we see that the following mappings
$$ A: V\rightarrow V^*, \  F: V\times V\rightarrow V^*  $$
are well defined. In particular, we have
$$ { }_{V^*}\<F(u,v),w\>_V=-{ }_{V^*}\<F(u,w),v\>_V, \   { }_{V^*}\<F(u,v),v\>_V=0,\  u,v,w\in V. $$
Now we consider the stochastic 2-D Navier-Stokes equation
\begin{equation}\label{SNSE}
\d X_t=\left(AX_t+F(X_t)+f_t\right)\d t+ B(X_t)\d W_t,
\end{equation}
where  $W_t$ is a Wiener process on $H$.

\begin{exa}(Stochastic 2-D Navier-Stokes equation)
Suppose that $X_0\in L^{4}(\Omega, \mathcal{F}_0,\mathbb{P}; H)$ and
$B:V\rightarrow L_2(H)$ satisfies
$$ \|B(v_1)-B(v_2)\|_2^2\le K\left(1+\|v_2\|_{L^4\left(\Lambda;\mathbb{R}^2\right)}^4 \right)
\|v_1-v_2\|_H^2, \ v_1,v_2\in V, $$ where $K$ is some constant.
 Then
    $(\ref{SNSE})$
    has a unique solution $\{X_t\}_{t\in [0,T]}$ and this solution satisfies
\begin{equation*}
\E\left(\sup_{t\in[0,T]}\|X_t\|_H^4+\int_0^T \|X_t\|_V^2 \d t \right)   < \infty.
\end{equation*}
\end{exa}

\begin{proof} The hemicontinuity $(H1)$ is obvious since $F$ is a bilinear map.

Note that $ { }_{V^*}\<F(v),v\>_V=0$, it is also easy to show $(H3)$
with $\alpha=2$:
\begin{equation*}\begin{split}
 { }_{V^*}\<Av+F(v)+f_t,v\>_V &\le -\nu\|v\|_V^2+\|f_t\|_{V^*}\|v\|_V \le
  -\frac{\nu}{2}\|v\|_V^2+C\|f_t\|_{V^*}^2, \ v\in V,\\
  \|B(v)\|_2^2 &\le 2K\|v\|_H^2+2\|B(0)\|_2^2, \ v\in V.
\end{split}
\end{equation*}
 Recall the following estimates (cf.
e.g.\cite[Lemmas 2.1, 2.2]{MS02})
\begin{equation}\label{e2}
 \begin{split}
  |{ }_{V^*}\<F(w),v\>_V| &\le 2 \|w\|_{L^4(\Lambda;\mathbb{R}^2)}\|v\|_V; \\
  |{ }_{V^*}\<F(w),v\>_V| &\le 2  \|w\|_V^{3/2} \|w\|_H^{1/2}  \|v\|_{L^4(\Lambda;\mathbb{R}^2)}, v,w\in V. \\
 \end{split}
\end{equation}
Then we have
\begin{equation}
 \begin{split}
  { }_{V^*}\<F(u)-F(v),u-v\>_V &=-  { }_{V^*}\<F(u,u-v),v\>_V+  { }_{V^*}\<F(v,u-v),v\>_V \\
 &= -  { }_{V^*}\<F(u-v),v\>_V \\
 &\le 2  \|u-v\|_V^{3/2} \|u-v\|_H^{1/2}  \|v\|_{L^4(\Lambda;\mathbb{R}^2)} \\
& \le \frac{\nu}{2} \|u-v\|_V^{2} + \frac{32}{\nu^3}  \|v\|_{L^4(\Lambda;\mathbb{R}^2)}^4 \|u-v\|_H^{2},
\ u,v\in V.
 \end{split}
\end{equation}
Hence we have the local monotonicity $(H2)$ with $\rho(v)=
\|v\|_{L^4(\Lambda;\mathbb{R}^2)}^4 $:
$$  { }_{V^*}\<Au+F(u)-Av-F(v),u-v\>_V
\le -\frac{\nu}{2} \|u-v\|_V^{2} + \frac{32}{\nu^3}  \|v\|_{L^4(\Lambda;\mathbb{R}^2)}^4 \|u-v\|_H^{2}.  $$

(\ref{e2}) and (\ref{2d}) imply that  $(H4)$ and (\ref{c3}) hold
with $\beta=2$.

Therefore, the existence and uniqueness of solutions to (\ref{SNSE})
follow from Theorem \ref{T1} by taking $p=4$.
\end{proof}

\begin{rem} (1) If the noise in (\ref{SNSE}) is additive type, then the
existence and uniqueness of solutions to (\ref{SNSE}) have been
established in \cite{MS02}. Here we can conclude the same result for
(\ref{SNSE}) with general multiplicative noise  by a direct
application of our main result.

(2) For the 3-D Navier-Stokes equation, we recall the following well-known estimate (cf.  e.g. \cite[(2.5)]{MS02})
$$   \|\psi\|_{L^4}^4\le 4 \|\psi\|_{L^2} \|\nabla \psi\|_{L^2}^3, \ \psi\in W_0^{1,2}(\Lambda; \mathbb{R}^3).   $$
Then  one can show  that
\begin{equation*}
 \begin{split}
  { }_{V^*}\<F(u)-F(v),u-v\>_V &= -  { }_{V^*}\<F(u-v),v\>_V \\
 &\le 2  \|u-v\|_V^{7/4} \|u-v\|_H^{1/4}  \|v\|_{L^4(\Lambda;\mathbb{R}^3)} \\
& \le \frac{\nu}{2} \|u-v\|_V^{2} + \frac{2^{12}}{\nu^7}  \|v\|_{L^4(\Lambda;\mathbb{R}^3)}^8 \|u-v\|_H^{2},
\ u,v\in V.
 \end{split}
\end{equation*}
Hence we have the following local monotonicity $(H2)$:
$$  { }_{V^*}\<Au+F(u)-Av-F(v),u-v\>_V
 \le -\frac{\nu}{2} \|u-v\|_V^{2} + \frac{2^{12}}{\nu^7}  \|v\|_{L^4(\Lambda;\mathbb{R}^3)}^8 \|u-v\|_H^{2}.  $$

 Another form of local monotonicity  can be derived similarly:
\begin{equation*}
 \begin{split}
  { }_{V^*}\<F(u)-F(v),u-v\>_V &= -  { }_{V^*}\<F(u-v),v\>_V \\
 &\le 2  \|u-v\|_V^{3/2} \|u-v\|_H^{1/2}  \|v\|_{L^6(\Lambda;\mathbb{R}^3)} \\
& \le \frac{\nu}{2} \|u-v\|_V^{2} + \frac{32}{\nu^3}  \|v\|_{L^6(\Lambda;\mathbb{R}^3)}^4 \|u-v\|_H^{2},
\ u,v\in V.
 \end{split}
\end{equation*}

(3) Concerning the growth condition, we have  in the 3-D case that
$$ \|F(u)\|_{V^*}\le 2 \|u\|_{L^4(\Lambda; \mathbb{R}^3)}^2\le 4\|u\|_H^{1/2}\|u\|_V^{3/2}, \ u\in V.$$

Unfortunately, this is not enough to verify  $(H4)$ in Theorem
\ref{T1}.

(4) One should note that the only role of $(H4)$ is to assure that
$\|A(\cdot, X^{(n)})\|_{K^*}$ is uniformly bounded for all $n$ (see
Lemma  \ref{L2}). Therefore, one can replace $(H4)$ by some weaker
growth condition   once we can derive some stronger a priori
estimate for  $X^{(n)}$ in the Galerkin approximation (e.g. as in
\cite{RZ10}). One good example of a further generalization of our
main result is that  we can apply Theorem \ref{T1} (with a revised
version of $(H4)$) to derive the existence and uniqueness of
solutions to the following stochastic tamed 3-D Navier-Stokes
equation with smooth enough initial condition:
\begin{equation*}
\d X_t=\left(AX_t+F(X_t)+f_t-P_H\left(g_N(|X_t|^2)
X_t\right)\right)\d t+ B(X_t)\d W_t,
\end{equation*}
where the taming function $g_N : \mathbb{R}_+\rightarrow
\mathbb{R}_+$ is smooth and satisfies for some $N > 0$
$$\begin{cases} & g_N(r)=0,\  \text{if}\  r\le N,\\
 & g_N(r)=(r-N)/\nu,\   \text{if}\  r\ge N+1,\\
 & 0\le g_N^\prime(r)\le C,\  \  r\ge 0.
\end{cases}
$$
 We refer to
\cite{RZ09,RZ10} for more details on the stochastic tamed 3-D
Navier-Stokes equation.
\end{rem}

\section{Appendix: The classical monotone and coercivity conditions}

For the existence and
uniqueness of the solution to (\ref{SEE}) we  recall the
following classical monotone and coercivity conditions on $A$ and $B$.

Suppose there exist constants $\alpha>1$,  $\theta>0$,
$K$ and a positive adapted process $f\in L^1([0,T]\times \Omega; \d
    t\times \mathbb{P})$ such that the
 following
 conditions hold for all $v,v_1,v_2\in V$ and $(t,\omega)\in [0,T]\times \Omega$.
\begin{enumerate}
    \item [$(A1)$] (Hemicontinuity) The map
     $ s\mapsto { }_{V^*}{ }_{V^*}\<A(t,v_1+s
 v_2),v\>_V$
  is  continuous on $\mathbb{R}$.

    \item [$(A2)$] (Monotonicity)
$$2 { }_{V^*}\<A(t,v_1)-A(t,v_2), v_1-v_2\>_V
    +\|B(t,v_1)-B(t,v_2)\|_{2}^2\\
     \le K\|v_1-v_2\|_H^2. $$

\item [$(A3)$] (Coercivity)
    $$ 2 { }_{V^*}\<A(t,v), v\>_V +\|B(t,v)\|_{2}^2 +\theta
    \|v\|_V^{\alpha} \le f_t +K\|v\|_H^2.$$

\item[$(A4)$] (Growth)
$$ \|A(t,v)\|_{V^*} \le f_t^{(\alpha-1)/\alpha} +
 K\|v\|_V^{\alpha-1}.$$
    \end{enumerate}

\beg{thm} {\bf (\cite{KR79} Theorems II.2.1, II.2.2)}\label{L0}
 Suppose $(A1)-(A4)$ hold,
    then for any $X_0\in L^2(\Omega, \mathcal{F}_0,\mathbb{P}; H)$
    $(\ref{SEE})$
    has a unique solution $\{X_t\}_{t\in [0,T]}$ and this solution satisfies
$$\E\sup_{t\in[0,T]}\|X_t\|_H^2< \infty.$$
Moreover, we have the following It\^{o}  formula
\begin{equation*}
\begin{split}
\|X_t\|_H^2=&\|X_0\|_H^2+\int_0^t\left( 2{ }_{V^*}\<A(s,X_s), X_s\>_V+\|B(s,X_s)\|_2^2\right)\d s\\
+&2\int_0^t\<X_s,B(s,X_s)\d W_s\>_H, \ t\in[0,T],\  \P-a.s.
\end{split}
\end{equation*}
\end{thm}

 \paragraph{Acknowledgement.}
This work is supported in part by
the SFB-701, the BiBoS-Research Center and NNSFC(10721091). The support of  Issac Newton Institute for Mathematical 
Sciences in Cambridge is also gratefully acknowledged where part of this work was done during the special semester on 
``Stochastic Partial Differential Equations''.


\end{document}